\newtheorem{lemma}{Lemma}
\newtheorem{assumption}{Assumption}
\newtheorem{remark}{Remark}
\newtheorem{theorem}{Theorem}
\newtheorem{example}{Example}
\begin{document}

\title{
\huge{Exponentially Convergent Algorithm Design for Constrained Distributed Optimization via Non-smooth Approach}
}

\author{Weijian~Li, Xianlin~Zeng, Shu~Liang and Yiguang~Hong

\thanks{W.~Li is with the Department of Automation, University of Science and Technology of China, Hefei 230027, Anhui, China, and is also with the Key Laboratory of Systems and Control, Academy of Mathematics and Systems	Science, Chinese Academy of Sciences, Beijing, 100190, China,
e-mail: \texttt{ustcwjli@mail.ustc.edu.cn}.

X.~Zeng is with the Key Laboratory of Intelligent Control and Decision of Complex Systems, School of Automation, Beijing Institute of Technology, Beijing, 100081, China, e-mail: \texttt{xianlin.zeng@bit.edu.cn}.

S. Liang is with the Key Laboratory of Knowledge Automation for
Industrial Processes of Ministry of Education, School of Automation and
Electrical Engineering, University of Science and Technology Beijing, Beijing
100083, China, and is also with the Institute of Artificial Intelligence,
University of Science and Technology Beijing, Beijing 100083, China.
e-mail: \texttt{sliang@ustb.edu.cn}.

Y.~Hong is with the Key Laboratory of Systems and Control, Academy of Mathematics and Systems
Science, Chinese Academy of Sciences, Beijing, 100190, China, e-mail: \texttt{yghong@iss.ac.cn}.
}}

\maketitle

\begin{abstract}
We consider 
minimizing a sum of non-smooth objective functions with set constraints in a distributed manner.
As to this problem, we propose a distributed algorithm with an exponential convergence rate for the first time.
By the exact penalty method, we reformulate the problem equivalently as a standard distributed one without consensus constraints. Then we design a distributed projected subgradient algorithm with the help of differential inclusions. Furthermore, we show that the algorithm converges to the optimal solution exponentially for strongly convex objective functions.
\end{abstract}

\begin{IEEEkeywords}
Exponential convergence, constrained distributed optimization, non-smooth approach, projected gradient dynamics, exact penalty method
\end{IEEEkeywords}

\section{INTRODUCTION}

In the past decade, distributed convex optimization has received intensive research interests due to its broad applications in distributed control \cite{antonell2013interconnected}, recourse allocation \cite{amir2014optimal}, machine learning \cite{li2014communication}, etc. The basic idea is that all agents cooperate to compute an optimal solution with their local information and neighbors' states in a multi-agent network.
A variety of distributed algorithms, either discrete-time or continuous-time, have been proposed for different formulations. For instance, the subgradient method \cite{nedic2009distri}, the dual averaging method \cite{duchi2011dual} and the augmented Lagrangian method \cite{chatzipanagiotis2015augmented} were designed for the unconstrained distributed optimization, while the primal-dual dynamics was explored for constrained distributed problems \cite{liang2017distributed, cherukuri2016asymptotic, zeng2016distributed, zhu2018projected}. Among these formulations, one of the most important one lies in the distributed optimization problem with set constraints \cite{zeng2016distributed, lei2016primal}. As to algorithms, the continuous-time design, including differential equations \cite{wang2010control} and differential inclusions \cite{liu2016collective}, are increasingly popular because they may be implemented by continuous-time physical systems, and moreover, the Lyapunov stability theory provides a powerful tool for their convergence analysis.

Convergence rate is an important criterion to evaluate the performance of a distributed algorithm. Particularly, the exponential convergence is desired in many scenarios. In fact, great efforts have been paid for the exponential convergence of distributed algorithms, especially for the unconstrained distributed optimization \cite{shi2015Extra, ali2017convergence, liang2019exponential}. In \cite{shi2015Extra}, an exact first-order algorithm was proposed with fixed stepsizes and linear convergence rates for the strongly convex objective functions. The linear convergence for alternating direction method of multipliers (ADMM) was discussed in \cite{shi2014on, ali2017convergence}. In \cite{liang2019exponential, yi2019exponential}, the strongly convex assumption was relaxed by metric sub-regularity and restricted secant inequality, respectively, to achieve the exponential convergence for the distributed primal-dual dynamics. As to the distributed problems with affine constraints, some pioneering works have also been done in \cite{cortes2019distributed, yi2016initialization, nedic2018improved}. Based on the saddle point dynamics, a distributed algorithm was proposed with exponential convergence in \cite{cortes2019distributed}. In \cite{deng2017distributed, yi2016initialization}, the distributed continuous-time algorithms for the resource allocation were explored with exponential convergence rates in the absence of set constraints. In \cite{nedic2018improved}, an improved distributed algorithm was designed with geometric rates under the time-varying communication topology.

For the distributed formulations with set constraints, convergence rates have been analyzed for some existing methods. In \cite{liu2017convergence}, the authors reconsidered the consensus-based projected subgradient algorithm, which resulted in a convergence rate of $O(1/{\sqrt k})$ with the non-summable stepsize. Both the distributed continuous-time \cite{zeng2016distributed, zeng2018distributedsiam} and discrete-time \cite{lei2016primal} primal-dual methods were developed, and they converged to an optimal solution with a convergences rate of $O(1/t)$. However, it is still challenging to design a distributed algorithm with exponential convergence rates for these problems.

Inspired by the above observations, we focus on the distributed optimization problem of a sum of non-smooth objective functions with local set constraints. By the exact penalty method, we reformulate the problem equivalently to remove consensus constraints. Furthermore, we explore a distributed algorithm, and provide rigorous proofs for its correctness and convergence properties. Main contributions are summarized as follows.
\begin{enumerate}[a)]
\item  We propose a new distributed continuous-time algorithm by combining the differentiated projected operator and the subgradient method. Note that the algorithm can deal with non-smooth objective functions. Additionally, it is with lower computation and communication burden than the primal-dual algorithm in \cite{zeng2016distributed, lei2016primal}.

\item We show that the proposed algorithm converges to an optimal solution exponentially for strongly convex objective functions. Compared with the existing results, this is the first work with an exponential convergence rate for this problem to our best knowledge.
\end{enumerate}

The rest of this paper is organized as follows. Section \uppercase\expandafter{\romannumeral2} presents necessary preliminaries, while Section \uppercase\expandafter{\romannumeral3} formulates and reformulates our problem. In Section \uppercase\expandafter{\romannumeral4}, a distributed  algorithm is proposed with convergence analysis. Finally, numerical simulations are carried out in Section \uppercase\expandafter{\romannumeral5} and concluding remarks are given in Section \uppercase\expandafter{\romannumeral6}.

\textbf{Notations:}
Let $\mathbb R$ be the set of real numbers, $\mathbb R_{\ge 0}$ be the set of non-negative real numbers and $\mathbb R^m$ be the set of $m$ dimensional real column vectors. Denote $\bm 0$ as vectors with all entries being 0, whose dimensions are indicated
by their subscripts. Denote $x^T$ as the transpose of $x$. Denote $\otimes$ as the Kronecker product. Let $|\cdot |$, $\Vert \cdot \Vert$ be $l_1$-norm and $l_2$-norm of a vector, respectively. For $x, y \in \mathbb R^m$, their Euclidean inner product is denoted by $\langle x,y\rangle$, or sometimes simply $x^T y$. For $x_i\in\mathbb R^m, i\in \{1,\dots,n\}$, we denote $\bm x={\rm col} \{x_1,\dots,x_n\}$ as the vector in $\mathbb R^{mn}$ defined by stacking $x_i$ together in columns.
Define $B(x;r)=\{y~|~\Vert y-x\Vert \le r\}$.
Denote ${\rm int}(\Omega)$ as the interior points of set $\Omega$.
Let  $\Omega_1 \times \Omega_2$ be the Cartesian product of two sets $\Omega_1$ and $\Omega_2$.

\section{PRELIMINARIES}

In this section, we introduce some necessary preliminaries about convex analysis, graph theory and differential inclusion.

\subsection{Convex Analysis}

A set $\Omega \subset \mathbb R^m$ is convex if 
$\lambda x+(1-\lambda)y \in \Omega$ for all $x,y\in \Omega$ and $\lambda \in [0,1]$. 
A function $f:\Omega \rightarrow \mathbb R$ is convex if $\Omega \subset \mathbb R^m$ is a convex set, and moreover, 
\begin{equation*}
f(\theta x+(1-\theta) y) \le \theta f(x)+(1-\theta)f(y), ~\forall x, y\in \Omega, ~\forall \theta \in [0,1].
\end{equation*}
If $g_f(x)\in \mathbb R^m$ satisfies  
\begin{equation} \label{cov}
f(y) \ge f(x)+\langle y-x, g_f(x) \rangle,
~\forall y\in \Omega
\end{equation}
then $g_f(x)\in \partial f(x)$, where $\partial f(x)$ is the subdifferential of $f$ at $x$. Furthermore, $f$ is said to be $\mu$-strongly convex if 
\begin{equation*}
\langle g_f(x)-g_f(y), y-x\rangle \ge \mu \Vert y-x\Vert^2, ~\forall x, y\in\Omega.
\end{equation*}

Let $\Omega \subset \mathbb R^m$ be a convex set. For $x \in \Omega$, the tangent cone to $\Omega$ at $x$ is defined by
\begin{equation}
\begin{aligned}
\label{tancon_def}
\mathcal T_\Omega(x) \triangleq 
\{\lim_{k\to +\infty} \frac {x_k-x}{\tau_k} | &x_k \in \Omega,x_k \to x, \tau_k >0, \tau_k \to 0\},
\end{aligned}
\end{equation} 
while the normal cone to $\Omega$ at $x$ is defined by
\begin{equation}
\label{norcon_def}
\mathcal N_{\Omega}(x) \triangleq \{v\in \mathbb R^m |v^T(y-x)\le 0, ~\forall y\in\Omega\}.
\end{equation}
For $x\in \mathbb R^m$, the projection operator $P_{\Omega}(x)$ is defined by 
\begin{equation} \label{pro}
P_{\Omega}(x)={\rm argmin}_{y\in\Omega} \Vert x-y \Vert. 
\end{equation}
Then
\begin{subequations}
\begin{align}
\langle x-P_{\Omega}(x), z-P_{\Omega}(x) \rangle
\le& ~0,~\forall z\in \Omega \label{pro_ine}\\
x-P_{\Omega}(x) \in& ~\mathcal N_{\Omega}(x). \label{pro_ine2}
\end{align}
\end{subequations}
Referring to \cite{brogliato2006equivalence}, we define the differentiated projection operator $P_{\mathcal T_{\Omega}(x)}(y)$, which can be computed by
\begin{equation} \label{tancom}
P_{\mathcal T_{\Omega}(x)}(y)=y-\beta z^*,
\end{equation}
where $\beta={\rm max}\{0, y^T z^*\}$, and $z^* ={\rm argmax}_{z \in \mathcal N_{\Omega}(x)}\langle y,z\rangle$ such that $\Vert z\Vert=1$.

\subsection{Graph Theory}

Consider a multi-agent network described by an  undirected graph $\mathcal G(\mathcal V, \mathcal E)$, where $\mathcal V$ is the node set and $\mathcal E \subset \mathcal V \times \mathcal V$ is the edge set. Node $j$ is a neighbor of node $i$ if and only if $(i,j)\in \mathcal E$. Denote $\mathcal N_i=\{j|(i,j)\in \mathcal E\}$ as the set of agent $i$'s neighbors.
All nodes can exchange information with their neighbors. A path between nodes $i$ and $j$ is denoted as a sequence of edges $(i, i_1), (i_1, i_2), \dots (i_k, j)$ in
the graph with distinct nodes $i_l \in \mathcal V$. Graph $\mathcal G$ is said to be connected if there exists a path between any pair of distinct nodes.

\subsection{Differential Inclusion}

A differential inclusion is given by
\begin{equation}
\label{dif_in}
\dot x(t) \in \mathcal F(x(t)),
~x(0)=x_0,~t\ge 0
\end{equation}
where $\mathcal F: \mathbb R^m \rightrightarrows \mathbb R^m$ is a set-valued map. A Caratheodory solution to (\ref{dif_in}) defined on $[0,\tau) \subset [0,+\infty)$ is an absolutely continuous function $x:[0,\tau) \rightarrow \mathbb R^m$ satisfying (\ref{dif_in}) for almost all $t\in [0,\tau)$ (in the sense of Lebesgue measure) \cite{cortes2008discontinuous}. The solution $t \rightarrow x(t)$ to (\ref{dif_in}) is a right maximal solution if it cannot be extended in time. Suppose that all the right maximal solutions to (\ref{dif_in}) exists on $[0,+\infty)$. 
If $\bm 0_m \in \mathcal F(x_e)$, then $x_e$ is an equilibrium point of (\ref{dif_in}).
The graph of $\mathcal F$ is defined by ${\rm gph}\mathcal F=\{(x,y)~|~y\in \mathcal F(x),~x \in \mathbb R^m\}$.
The set valued map $\mathcal F$ is said to be upper semi-continuous at $x$ if  there exists $\delta>0$ for all $\epsilon>0$ such that 
\begin{equation*}
\mathcal F(y) \subset \mathcal F(x)+B(0;\epsilon), ~\forall y\in B(x;\delta)
\end{equation*}
and it is upper semi-continuous if it is so at every $x \in \mathbb R^m$.
We collect the following results from \cite[p. 266, p. 267]{aubin1984differential}.
\begin{lemma} \label{exi_lem}
Let $\Omega$ be a closed convex subset of $\mathbb R^m$, and $\mathcal F$ be an upper semi-continuous map with non-empty compact value from $\Omega$ to $\mathbb R^m$. Consider two differential inclusions given by
\begin{subequations}
\begin{align}
\dot x(t) \in&~ \mathcal F(x(t)) -\mathcal N_{\Omega}(x(t)),
~x(0)=x_0 
\label{dyn_1}\\
\dot x(t) \in&~ P_{\mathcal T_\Omega} [\mathcal F(x(t))],  
~~~~~~~~~x(0)=x_0. 
\label{dyn_2}
\end{align}
\end{subequations}
Then the following two statements hold.\\
(\romannumeral1) There is a solution to dynamics (\ref{dyn_1}) if $\mathcal F$ is bounded on $\Omega$. \\
(\romannumeral2) The trajectory $x(t)$ is a solution of (\ref{dyn_1}) if and only if it is a solution of (\ref{dyn_2}). 
\end{lemma}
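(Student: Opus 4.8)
Since the statement is attributed to \cite{aubin1984differential}, I would organize the argument around the polarity between the tangent and normal cones together with viability theory for differential inclusions. The single fact that drives both parts is the Moreau decomposition: for a closed convex $\Omega$ and any $x\in\Omega$, the cones $\mathcal T_\Omega(x)$ and $\mathcal N_\Omega(x)$ are mutually polar, so every $v\in\mathbb R^m$ splits orthogonally as $v=P_{\mathcal T_\Omega(x)}(v)+P_{\mathcal N_\Omega(x)}(v)$ with $P_{\mathcal N_\Omega(x)}(v)\in\mathcal N_\Omega(x)$. Equivalently, $p=P_{\mathcal T_\Omega(x)}(v)$ holds if and only if $p\in\mathcal T_\Omega(x)$ and $v-p\in\mathcal N_\Omega(x)$; this variational characterization of the projection onto a cone is the workhorse for part~(\romannumeral2).

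For the equivalence, the implication (\ref{dyn_2})$\Rightarrow$(\ref{dyn_1}) is immediate: if $\dot x=P_{\mathcal T_\Omega(x)}(f)$ for some $f\in\mathcal F(x)$, then setting $w:=f-\dot x$ gives $w\in\mathcal N_\Omega(x)$ by the decomposition, hence $\dot x=f-w\in\mathcal F(x)-\mathcal N_\Omega(x)$. The converse (\ref{dyn_1})$\Rightarrow$(\ref{dyn_2}) carries the real content. Here I would first argue that any Caratheodory solution of (\ref{dyn_1}) started in $\Omega$ remains in $\Omega$ for all $t$: at a boundary point the $-\mathcal N_\Omega(x)$ term cancels the outward component of $\mathcal F(x)$, keeping the velocity tangent, which can be made precise through a distance-to-$\Omega$ argument. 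Granting $x(t)\in\Omega$, at almost every $t$ the derivative $\dot x(t)$ exists and, being the velocity of a curve lying in $\Omega$, satisfies $\dot x(t)\in\mathcal T_\Omega(x(t))$. Writing $\dot x(t)=f(t)-w(t)$ with $f(t)\in\mathcal F(x(t))$ and $w(t)\in\mathcal N_\Omega(x(t))$, we then have both $\dot x(t)\in\mathcal T_\Omega(x(t))$ and $f(t)-\dot x(t)=w(t)\in\mathcal N_\Omega(x(t))$, so the variational characterization forces $\dot x(t)=P_{\mathcal T_\Omega(x(t))}(f(t))\in P_{\mathcal T_\Omega(x(t))}[\mathcal F(x(t))]$, which is precisely (\ref{dyn_2}).

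For existence in part~(\romannumeral1), I would invoke the viability theorem of \cite{aubin1984differential}. The map $\mathcal F$ is upper semi-continuous with non-empty compact values and bounded on $\Omega$ by hypothesis; moreover, since $P_{\mathcal T_\Omega(x)}(f)\in\mathcal T_\Omega(x)$ for every $f$, the tangency condition $\big(\mathcal F(x)-\mathcal N_\Omega(x)\big)\cap\mathcal T_\Omega(x)\neq\varnothing$ holds at every $x\in\Omega$, so $\Omega$ is a viability domain for the right-hand side. The construction is then the standard Euler polygonal scheme: one builds approximate trajectories whose velocities are selected from $\mathcal F$ and projected onto $\mathcal T_\Omega$; boundedness of $\mathcal F$ yields uniform Lipschitz estimates and hence equicontinuity, so Arzel\`a--Ascoli extracts a uniformly convergent subsequence, and the upper semi-continuity of $\mathcal F$ together with the closedness of the graph of $\mathcal N_\Omega$ allows passage to the limit, showing the limit curve solves (\ref{dyn_1}).

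I expect the principal obstacles to be, first, rigorously establishing invariance of $\Omega$ under (\ref{dyn_1})---the step that licenses $\dot x(t)\in\mathcal T_\Omega(x(t))$ in the forward direction of~(\romannumeral2)---and, second, the limit passage in~(\romannumeral1), where one must verify that the weak limit of the selected velocities still lies in $\mathcal F(x(t))-\mathcal N_\Omega(x(t))$ for almost every $t$. Both are handled by the classical machinery of \cite{aubin1984differential}, so in the paper I would cite the relevant statements directly rather than reproduce the measure-theoretic details.
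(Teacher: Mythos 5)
The paper itself gives no proof of this lemma: it is imported verbatim from Aubin and Cellina (pp.~266--267), so there is nothing in the paper to compare your argument against line by line. Your sketch follows the standard textbook route (Moreau decomposition for the equivalence, viability theory for existence), and deferring the existence proof and the measure-theoretic limit passage to the cited reference is exactly what the paper does. In that sense your overall plan is consistent with the source.

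There is, however, one genuine flaw in the step that carries the whole of part~(ii). You assert that $p=P_{\mathcal T_\Omega(x)}(v)$ holds \emph{if and only if} $p\in\mathcal T_\Omega(x)$ and $v-p\in\mathcal N_\Omega(x)$. That characterization is false without the orthogonality condition $\langle p,\,v-p\rangle=0$: take $\Omega=\mathbb R_{\ge 0}\subset\mathbb R$ at $x=0$, so $\mathcal T_\Omega(0)=\mathbb R_{\ge 0}$ and $\mathcal N_\Omega(0)=\mathbb R_{\le 0}$; then $v=1$, $p=2$ satisfies $p\in\mathcal T_\Omega(0)$ and $v-p=-1\in\mathcal N_\Omega(0)$, yet $P_{\mathcal T_\Omega(0)}(1)=1\neq 2$. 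Consequently, in your forward implication the facts $\dot x(t)\in\mathcal T_\Omega(x(t))$ and $f(t)-\dot x(t)\in\mathcal N_\Omega(x(t))$ do not by themselves force $\dot x(t)=P_{\mathcal T_\Omega(x(t))}(f(t))$. The missing ingredient is that at almost every $t$ the trajectory is differentiable from both sides, so both $\dot x(t)$ and $-\dot x(t)$ are limits of difference quotients of points of $\Omega$ and hence both lie in $\mathcal T_\Omega(x(t))$; for any $w\in\mathcal N_\Omega(x(t))$ this gives $\langle w,\dot x(t)\rangle\le 0$ and $\langle w,-\dot x(t)\rangle\le 0$, i.e.\ $\langle f(t)-\dot x(t),\,\dot x(t)\rangle=0$. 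With that orthogonality added, the three conditions do characterize the projection onto the cone and your argument closes. A secondary, smaller caveat: the viability theorem in its standard form requires compact convex values, whereas $\mathcal F(x)-\mathcal N_\Omega(x)$ has unbounded values, so part~(i) cannot be obtained by naively invoking viability for that right-hand side; this is precisely why the existence statement is its own theorem in Aubin--Cellina, and your decision to cite it rather than reprove it is the right one.
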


Consider dynamics (\ref{dif_in}). Let $V$ be a locally Lipschitz continuous function, and $\partial V(x)$ be the 
Clarke generalized gradient of $V$ at $x$. The set-valued Lie derivative for $V$ is defined by
$\mathcal L_{\mathcal F} V(x) \triangleq\{a\in \mathbb R: a = p^T v, p\in \partial V(x),v\in \mathcal F(x)\}$.

The following Barbalat's lemma \cite[Lemma 4.1]{haddadnonlinear} will be used
in the convergence analysis of this paper.
\begin{lemma} \label{Barlem}
Let $\sigma: [0, \infty) \rightarrow \mathbb R$ be a uniformly
continuous function. Suppose that $\lim_{t\to\infty}\int_0^t \sigma(s)ds$ exists, and is finite. Then $\lim_{t\to\infty} \sigma(t)=0$.
\end{lemma}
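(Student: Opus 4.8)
The plan is to argue by contradiction, exploiting the tension between uniform continuity and convergence of the integral. Suppose $\sigma(t)\not\to 0$ as $t\to\infty$. Then there exist $\epsilon>0$ and a sequence $t_n\to+\infty$ with $|\sigma(t_n)|\ge\epsilon$ for every $n$. The strategy is to show that each $t_n$ forces a non-negligible contribution to the integral over a fixed-width interval, and that these contributions cannot sum to a finite limit.

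First I would invoke uniform continuity to fix, independently of $n$, a radius $\delta>0$ such that $|s-t|\le\delta$ implies $|\sigma(s)-\sigma(t)|\le\epsilon/2$. Passing to a subsequence, I would thin out the $t_n$ so that the intervals $[t_n,t_n+\delta]$ are pairwise disjoint, which is possible since $\delta$ is fixed and $t_n\to+\infty$. For every $s\in[t_n,t_n+\delta]$ the reverse triangle inequality then gives $|\sigma(s)|\ge|\sigma(t_n)|-|\sigma(s)-\sigma(t_n)|\ge\epsilon-\epsilon/2=\epsilon/2$. In particular $\sigma$ never vanishes on $[t_n,t_n+\delta]$, so by the intermediate value theorem it keeps a constant sign there; hence
\begin{equation*}
\left|\int_{t_n}^{t_n+\delta}\sigma(s)\,ds\right|=\int_{t_n}^{t_n+\delta}|\sigma(s)|\,ds\ge\frac{\epsilon\delta}{2}.
\end{equation*}

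Finally I would contradict the Cauchy criterion for the finite limit $F:=\lim_{t\to\infty}\int_0^t\sigma(s)\,ds$. Writing $F(t)=\int_0^t\sigma(s)\,ds$, convergence of $F(t)$ forces $F(t_n+\delta)-F(t_n)\to 0$ as $n\to\infty$, yet the previous display shows $|F(t_n+\delta)-F(t_n)|\ge\epsilon\delta/2$ for all $n$, a contradiction. Therefore $\sigma(t)\to 0$.

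The one point requiring care — the main (if mild) obstacle — is the constant-sign argument on each interval: it is what upgrades the pointwise bound $|\sigma|\ge\epsilon/2$ into a genuine lower bound on the \emph{absolute value} of the integral, ruling out cancellation. This is where continuity of $\sigma$ (not merely integrability) is essential, combined with the fact that $|\sigma|$ stays bounded away from zero across the whole interval. Thus uniform continuity is doing two jobs at once: it supplies a width $\delta$ that is uniform in $n$, and it propagates the lower bound from the center $t_n$ to the entire interval $[t_n,t_n+\delta]$ rather than to a single point of vanishing width.
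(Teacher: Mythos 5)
Your proof is correct and complete; the paper itself does not prove this lemma but simply imports it as Barbalat's lemma from the cited reference, so there is no in-paper argument to compare against. Your contradiction argument --- uniform continuity supplying a fixed width $\delta$, the reverse triangle inequality propagating the lower bound $|\sigma|\ge\epsilon/2$ across $[t_n,t_n+\delta]$, the intermediate value theorem ruling out sign changes (hence cancellation) so that the integral over each such interval has absolute value at least $\epsilon\delta/2$, and the Cauchy criterion for the convergent improper integral forcing these increments to zero --- is exactly the standard textbook proof, and every step is sound. The only superfluous element is passing to a subsequence with pairwise disjoint intervals: the Cauchy-criterion contradiction already works for the original sequence $t_n\to\infty$, so disjointness buys nothing, but it does no harm either.
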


\section{FORMULATION AND REFORMULATION}

In this section, we formulate the problem, and  reformulate it equivalently by the exact penalty method. In addition, we address the optimal condition for the reformulation.

\subsection{Problem Formulation}

Consider a network of $n$ agents interacting over a undirected graph $\mathcal G(\mathcal V, \mathcal E)$, where $\mathcal V=\{1, \dots, n\}$ and $\mathcal E \subset \mathcal V \times \mathcal V$. For all $i\in \mathcal V$, there are a local (non-smooth) objective function $f_i: \mathbb R^m \rightarrow \mathbb R$, and a local feasible constraint set $\Omega_i \subset \mathbb R^m$. All agents cooperate to reach a consensus solution that minimizes the global objection function $\sum_{i=1}^n f_i(x)$ in the feasible set $\cap_{i=1}^n \Omega_i$. To be strict, the optimization problem can be formulated as
\begin{equation} \label{pri_pro}
\begin{aligned}
{\rm min}~~ \sum\limits_{i=1}^n f_i(x) \quad
{\rm s.t.}~~x\in \cap_{i=1}^n \Omega_i
\end{aligned}
\end{equation}
where  $x \in \mathbb R^m$ is the decision variable to be solved. 


In fact, (\ref{pri_pro}) is a well-known constrained distributed optimization problem.
The non-smooth objective functions appear in a variety of fields including resource allocation, signal processing and machine learning, while the local set constraints are often necessary due to the performance limitations of agents in computation and communication capacities.
Both discrete-time \cite{lei2016primal, liu2017constrained} and continuous-time \cite{zeng2016distributed} algorithms have been explored for (\ref{pri_pro}). However, to our best knowledge, convergence rates of the existing results are no more than $\mathcal O(1/t)$.
The goal of this paper is to design a new distributed algorithm with an exponential convergence rate. 

To ensure the well-posedness of problem (\ref{pri_pro}), the following assumptions are made \cite[Assumption 3.1]{liang2017distributed}, \cite[Assumption 3.1]{zeng2016distributed}.

\begin{assumption} \label{con_ass}
For $i\in\mathcal V$, $f_i$ is convex on an open
set containing $\Omega_i$, and  $\Omega_i$ is convex and compact with $\cap_{i=1}^n {\rm int}(\Omega_i)\neq \emptyset$.
\end{assumption}

\begin{assumption} \label{lip_ass}
For $i\in\mathcal V$, $f_i$ is Lipschitz continuous on $\Omega_i$. There exists a constant $c>0$ such that
\begin{equation}\label{lip_ie}
|f_i(x)-f_i(y)|\le c\Vert x-y\Vert,~\forall x, y\in\Omega_i.
\end{equation}
\end{assumption}

\begin{assumption} \label{gra_ass}
The undirected graph of the multi-agent network is connected.
\end{assumption}

\begin{assumption} \label{sto_con_ass}
For $i\in \mathcal V$, $f_i$ is $\beta$-strongly convex on $\Omega_i$, that is, 
\begin{equation} 
\label{stro_equ}
\langle x-y, g_{f_i}(x)- g_{f_i}(y)\rangle 
\ge \beta \Vert x-y \Vert^2,~\forall x,y\in\Omega_i.
\end{equation}
where $g_{f_i}(x) \in \partial f_i(x)$ and $g_{f_i}(y) \in \partial f_i(y)$.
\end{assumption}

Assumption \ref{con_ass} on feasibility is reasonable to ensure the solvability of (\ref{pri_pro}). Compared with the formulation in \cite{zeng2016distributed}, we suppose that $f_i$ is Lipschitz continuous in Assumption \ref{lip_ass}, which is necessary for the problem reformulation in this work. However, we should note that the assumption is easy to hold in practice, especially when $\Omega_i$ is a compact set. 
Assumption \ref{gra_ass} on the communication topology is broadly employed for all agents obtaining a consensus solution.
Furthermore, Assumption \ref{sto_con_ass} is well-known to guarantee the exponential convergence. Subgradients are utilized in (\ref{stro_equ}) because the non-smooth objective functions are considered.

\subsection{Reformulation}

Notice that (\ref{pri_pro}) is not of a standard distributed structure. Under Assumption \ref{gra_ass}, it is equivalent to
\begin{equation}\label{dis_str}
\begin{aligned}
{\rm min} \quad &\sum\limits_{i=1}^n f_i(x_i)\\
{\rm s.t.}\quad &x_i=x_j,~x_i\in \Omega_i,~i\in \mathcal V, ~j\in \mathcal N_i
\end{aligned}
\end{equation}
where $\mathcal N_i$ is the neighbor set of agent $i$. By the exact penalty method, (\ref{dis_str}) can be cast into
\begin{equation}\label{dis_pen}
\begin{aligned}
{\rm min}\quad& \sum\limits_{i=1}^n f_i(x_i)+ \frac K2\sum\limits_{i=1}^n\sum\limits_{j\in \mathcal N_i} |x_i-x_j|\\
{\rm s.t.}\quad &x_i\in \Omega_i,~i\in \mathcal V, ~j\in \mathcal N_i
\end{aligned}
\end{equation}
where $K \in \mathbb R_{\ge 0}$ is the penalty factor. 

Throughout this paper, we define $\bm x={\rm col}\{x_1,\dots, x_n\}$, $\bar \Omega =\Omega_1 \times \dots \times \Omega_n$, $f(\bm x)= \sum_{i=1}^n f_i(x_i)$, and 
\begin{equation} \label{lag_fun}
\mathcal L(\bm x)=\sum\limits_{i=1}^n f_i(x_i)+ \frac K2\sum\limits_{i=1}^n\sum\limits_{j\in \mathcal N_i} |x_i-x_j|.
\end{equation}
The following lemma addresses the relationship between (\ref{dis_str}) and (\ref{dis_pen}).

\begin{lemma} \label{pro_equ}
Let Assumptions \ref{con_ass}, \ref{lip_ass} and \ref{gra_ass} hold. If the penalty factor satisfies $K>nc$, $\bm x^*={\rm col} \{x_1^*,\dots, x_n^*\}$ is an optimal solution to (\ref{dis_str}) if and only if $\bm x^*$ is an optimal solution to (\ref{dis_pen}).
\end{lemma}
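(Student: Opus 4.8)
The plan is to prove both implications through a single exact-penalty comparison, reducing the whole lemma to one inequality. Write $\phi(\bm x) := \frac12\sum_{i=1}^n\sum_{j\in\mathcal N_i}|x_i-x_j|$ for the unweighted penalty, so that $\mathcal L(\bm x)=f(\bm x)+K\phi(\bm x)$ by \eqref{lag_fun}. Two preliminary facts are immediate: $\phi$ is nonnegative and continuous on the compact set $\bar\Omega$, and under Assumption~\ref{gra_ass} (connectivity) one has $\phi(\bm x)=0$ \emph{iff} $x_1=\dots=x_n$, i.e. iff $\bm x$ is feasible for \eqref{dis_str}; hence every feasible point of \eqref{dis_str} satisfies $\mathcal L(\bm x)=f(\bm x)$. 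By Weierstrass both \eqref{dis_str} and \eqref{dis_pen} admit minimizers (continuous objective over a nonempty compact feasible set, nonemptiness of the consensus-feasible set coming from $\cap_{i=1}^n\mathrm{int}(\Omega_i)\neq\emptyset$ in Assumption~\ref{con_ass}). I would then recast the lemma as the single claim that for every $\bm x\in\bar\Omega$ one has $\mathcal L(\bm x)\ge f^\star$, where $f^\star$ is the optimal value of \eqref{dis_str}, with the inequality \emph{strict} whenever $\bm x$ is not consensus; both directions of the stated equivalence drop out of this.

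The central step is the comparison: for every $\bm x\in\bar\Omega$ there is a consensus point $\bm y=\mathrm{col}\{\bar y,\dots,\bar y\}$ with $\bar y\in\cap_{i=1}^n\Omega_i$ (thus feasible for \eqref{dis_str}) such that $f(\bm y)\le f(\bm x)+K\phi(\bm x)$. Granting this, $f^\star\le f(\bm y)\le f(\bm x)+K\phi(\bm x)=\mathcal L(\bm x)$, which is the reformulated claim. The estimate rests on two ingredients. First, connectivity controls the spread by the penalty: for any indices $i,k$, summing $\|\cdot\|$ along a path from $i$ to $k$ and using $\|v\|\le|v|$ gives $\|x_i-x_k\|\le\phi(\bm x)$, since the edges of the path contribute a subset of the terms defining $\phi(\bm x)$. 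Second, Lipschitz continuity (Assumption~\ref{lip_ass}) converts distances into objective gaps: $f(\bm y)-f(\bm x)=\sum_i\bigl(f_i(\bar y)-f_i(x_i)\bigr)\le c\sum_i\|\bar y-x_i\|$, valid because $x_i,\bar y\in\Omega_i$. If the central value could be taken as one of the $x_i$ themselves, the spread bound would give $\sum_i\|\bar y-x_i\|\le(n-1)\phi(\bm x)$, whence $f(\bm y)-f(\bm x)\le c(n-1)\phi(\bm x)<nc\,\phi(\bm x)\le K\phi(\bm x)$ under $K>nc$, with strict inequality exactly off the consensus set.

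The main obstacle is precisely that $x_{i_0}$ need not lie in $\cap_i\Omega_i$: the cheap comparison point is infeasible, and I must replace it by a genuine $\bar y\in\cap_i\Omega_i$ without exceeding the budget $nc\,\phi(\bm x)$. This is where Assumption~\ref{con_ass} is indispensable — the nonempty interior $\cap_i\mathrm{int}(\Omega_i)\neq\emptyset$ together with compactness yields a linear-regularity/error bound for the collection $\{\Omega_i\}$, so that a point of $\cap_i\Omega_i$ can be located whose total distance to the $x_i$ is still $O(\phi(\bm x))$ with the constant governed by $n$, and the strict threshold $K>nc$ supplies exactly the slack needed to absorb it. Quantifying this feasible relocation and verifying that the accumulated Lipschitz cost stays strictly below $K\phi(\bm x)$ is the delicate quantitative heart of the argument.

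Finally I would read off the two implications. If $\bm x^\star$ solves \eqref{dis_str}, then $\mathcal L(\bm x^\star)=f(\bm x^\star)=f^\star\le\mathcal L(\bm x)$ for all $\bm x\in\bar\Omega$ by the comparison, so $\bm x^\star$ solves \eqref{dis_pen}. Conversely, if $\bm x^\star$ solves \eqref{dis_pen}, then $\mathcal L(\bm x^\star)\le\mathcal L(\bm x^\dagger)=f^\star$ for any optimizer $\bm x^\dagger$ of \eqref{dis_str}; combined with $\mathcal L(\bm x^\star)\ge f^\star$ this forces $\mathcal L(\bm x^\star)=f^\star$, and the strictness of the comparison off the consensus set (which holds precisely because $K>nc$) rules out $\phi(\bm x^\star)>0$. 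Hence $\bm x^\star$ is consensus, feasible for \eqref{dis_str} with $f(\bm x^\star)=\mathcal L(\bm x^\star)=f^\star$, and therefore optimal for \eqref{dis_str}.
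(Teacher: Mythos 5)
Your strategy is structurally the same as the paper's proof of Lemma \ref{pro_equ}: reduce both implications to the single comparison $\mathcal L(\bm x)\ge f^\star$ on $\bar\Omega$ (strict off the consensus set), prove that comparison by bounding the spread of the $x_i$ through the graph penalty via a path argument, and convert distances into objective gaps with the Lipschitz constant $c$. Your path bound $\Vert x_i-x_k\Vert\le\phi(\bm x)$ and the resulting estimate $c\sum_i\Vert \bar y-x_i\Vert\le c(n-1)\phi(\bm x)$ are the analogues of (\ref{pen_prf1}) and (\ref{equ_ine}); the paper takes the average $\bar x=\frac1n\sum_i x_i$ as comparison point (cost $nc$, hence the threshold $K>nc$) where you take one of the $x_i$ (cost $(n-1)c$).

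The genuine gap is exactly the step you label ``the delicate quantitative heart'' and then do not carry out: relocating the comparison point into $\cap_{i=1}^n\Omega_i$. You assert that Assumption \ref{con_ass} gives a linear-regularity/error bound for $\{\Omega_i\}$ with a constant ``governed by $n$'' so that $K>nc$ absorbs the relocation cost. That is false: the regularity constant $\kappa$ of a collection of convex sets measures how transversally they intersect and can be arbitrarily large even for two sets in the plane with $\mathrm{int}(\Omega_1)\cap\mathrm{int}(\Omega_2)\neq\emptyset$. Concretely, with $\bar y=P_{\cap_i\Omega_i}(x_{i_0})$ the best available estimate is $\sum_i\Vert\bar y-x_i\Vert\le\big(n\kappa+(n-1)\big)\phi(\bm x)$, and the needed inequality $c\big(n\kappa+n-1\big)\le K$ would require $\kappa\le 1/n$, which never holds ($\kappa\ge 1$). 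So the comparison $\mathcal L(\bm x)\ge f^\star$ — and with it both directions of the lemma — is not established by your argument, and no choice of $K$ depending only on $n$ and $c$ can close it along this route. For what it is worth, the obstruction you isolated is real and not specific to your write-up: the paper's own chain (\ref{equ_ine}) applies the Lipschitz bound (\ref{lip_ie}) at the averaged point $\bar x$ and then compares $f(\bar{\bm x})$ with the optimal value of (\ref{dis_str}), both of which implicitly require $\bar x\in\cap_i\Omega_i$, a fact that does not follow from $x_i\in\Omega_i$ when the sets differ. The paper simply proceeds as if it did; a clean repair needs an additional hypothesis (e.g., identical constraint sets, or Lipschitz continuity of the $f_i$ on a common convex set containing all the $\Omega_i$) rather than a regularity constant hidden inside $K>nc$.
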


\begin{proof}
Define $\bar x=\frac 1n \sum_{i=1}^n x_i$, $\bar{\bm x}=1_n \otimes {\bar x}$
\begin{equation*} 
\begin{aligned}
h(\bm x)&=\frac 12\sum\limits_{i=1}^n \sum\limits_{j\in \mathcal N_i} |x_i-x_j|  
\ge \frac 12 \sum\limits_{i=1}^n \sum\limits_{j\in \mathcal N_i} \Vert x_i-x_j\Vert,
\end{aligned}
\end{equation*}
and moreover,
\begin{equation*}
\begin{aligned}
d(\bm x)=\sum\limits_{k=1}^n \Vert x_k-\bar x\Vert 
\le \frac 1n\sum\limits_{k=1}^n \sum\limits_{l=1}^n\Vert x_k- x_l\Vert.
\end{aligned}
\end{equation*}

For any $k,l\in \mathcal V$, there must be a path $\mathcal P_{kl} \subset \mathcal E$ connecting $k$ and $l$ due to Assumption \ref{gra_ass}. Then 
\begin{equation}
\label{pen_prf1}
\begin{aligned}
d(\bm x)\le \frac 1{2n} \sum\limits_{k=1}^n \sum\limits_{l=1}^n \sum\limits_{(i,j)\in \mathcal P_{kl}}\Vert x_i- x_j\Vert
\le& \frac 1{n} \sum\limits_{k=1}^n \sum\limits_{l=1}^n h(\bm x)=n h(\bm x).
\end{aligned}
\end{equation}
	
Let $K$ be a scalar such that $K>nc$. As a result, we derive
\begin{equation} \label{equ_ine}
\begin{aligned}
\mathcal L(\bm x) =& f(\bm x)+Kh(\bm x)
\ge f(\bm x)+cd(\bm x) \\
=& f(\bar{\bm x})+f(\bm x)-f(\bar{\bm x})+cd(\bm x) \ge f(\bar{\bm x}).
\end{aligned}
\end{equation}
The first inequality holds due to (\ref{pen_prf1}), while the second inequality holds via (\ref{lip_ie}). It follows from (\ref{equ_ine}) that ${\rm min}~\mathcal L(\bm x) \ge {\rm min}_{x_i=x_j} ~f(\bm x)$. Furthermore, the equality holds if and only if $x_i=\bar x$ for $i \in \mathcal V$. 

Conversely, all minima of ${\rm min}_{x_i=x_j} f(\bm x)$
are also minima of ${\rm min}_{x_i=x_j} \mathcal L(\bm x)$, and they are also of minima of $\mathcal L(\bm x)$ due to (\ref{equ_ine}). Thus, the conclusion follows.
\end{proof}

\begin{remark}
In light of \cite[Theorem 6.9]{ruszczynski2006nonlinear}, $K/2$ can be selected as a constant larger than the infinite norm of the Lagrange multipliers for the equality
constraints in (\ref{dis_str}). However, it is difficult to derive the Lagrange multiplier before solving a dual problem. Under Assumption \ref{lip_ass}, the explicit relationship between the penalty factor, objective functions and the network structure is established in Lemma \ref{pro_equ}.
\end{remark}

In fact, Lemma \ref{pro_equ} provides a sufficient condition for the equivalence between (\ref{dis_str}) and (\ref{dis_pen}). Thus, we can focus on solving (\ref{dis_pen}) without consensus constraints, whose optimal conditions are shown as follows.

\begin{lemma}
\label{opt_lem}
Under Assumptions \ref{con_ass}, \ref{lip_ass} and \ref{gra_ass}, $\bm x^*={\rm col} \{x_1^*,\dots,
x_n^*\}$ is an optimal solution to problem (\ref{dis_pen}) if and only if
\begin{equation} \label{opt_leme}
\bm 0_{m} \in 
P_{\mathcal T_{\Omega_i}(x_i^*)}[-\partial f_i(x_i^*)-K\sum\limits_{j\in \mathcal N_i}
{\rm Sgn}(x_i^*-x_j^*)], ~x_i^* \in \Omega_i
\end{equation}
where $\mathcal T_{\Omega_i}(x_i^*)$, $P_{\mathcal T_{\Omega_i}(x_i^*)}(\cdot)$ are defined by (\ref{tancon_def}) and (\ref{tancom}), respectively,
and ${\rm Sgn}(\cdot)$ is the set-valued sign function with each entry defined by
\begin{equation} \label{sgn_def}
\begin{aligned}
{\rm sgn}(u)\triangleq \partial |u|=
\left\{\begin{split}
&\{1\},   &{\rm if}~ u>0&\\
&[-1,1],  &{\rm if}~ u=0&\\
&\{-1\},  &{\rm if}~ u<0&.
\end{split}\right.
\end{aligned}
\end{equation}
Furthermore, $\bm x^*$ is also an optimal solution to problem (\ref{dis_str}) if  $K>nc$.
\end{lemma}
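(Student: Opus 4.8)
The plan is to characterize optimality of the convex program (\ref{dis_pen}) through a single global stationarity condition and then decompose it agent by agent. Since $\mathcal L$ in (\ref{lag_fun}) is a real-valued (hence continuous) convex function on all of $\mathbb R^{mn}$ and $\bar\Omega=\Omega_1\times\cdots\times\Omega_n$ is closed and convex, $\bm x^*$ minimizes $\mathcal L$ over $\bar\Omega$ if and only if $\bm 0_{mn}\in\partial\mathcal L(\bm x^*)+\mathcal N_{\bar\Omega}(\bm x^*)$. First I would invoke the subdifferential sum rule, which applies here without further qualification because $\mathcal L=f+Kh$ with $f(\bm x)=\sum_i f_i(x_i)$ and the consensus penalty $h(\bm x)=\tfrac12\sum_i\sum_{j\in\mathcal N_i}|x_i-x_j|$ both finite-valued and continuous. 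Using separability of $f$ and differentiating the penalty, the block-$i$ component of $\partial\mathcal L(\bm x^*)$ is $\partial f_i(x_i^*)+K\sum_{j\in\mathcal N_i}\mathrm{Sgn}(x_i^*-x_j^*)$, where the factor $\tfrac12$ together with the symmetric double sum collapses into the coefficient $K$, and $\mathrm{Sgn}$ is as in (\ref{sgn_def}).

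Next I would exploit the product structure of the constraint: $\mathcal N_{\bar\Omega}(\bm x^*)=\prod_i\mathcal N_{\Omega_i}(x_i^*)$, so global stationarity splits into the per-agent inclusions $\bm 0_m\in\partial f_i(x_i^*)+K\sum_{j\in\mathcal N_i}\mathrm{Sgn}(x_i^*-x_j^*)+\mathcal N_{\Omega_i}(x_i^*)$. To pass to the projected form (\ref{opt_leme}) I would use the polar-cone relation $\mathcal N_{\Omega_i}(x_i^*)=\mathcal T_{\Omega_i}(x_i^*)^{\circ}$ together with the identity, readily checked from the definition (\ref{tancom}), that $P_{\mathcal T_{\Omega_i}(x_i^*)}(w)=\bm 0_m$ holds precisely when $w\in\mathcal N_{\Omega_i}(x_i^*)$ (indeed if $w\in\mathcal N_{\Omega_i}(x_i^*)$ then $z^*=w/\Vert w\Vert$, $\beta=\Vert w\Vert$, and $w-\beta z^*=\bm 0_m$; conversely $w=\beta z^*\in\mathcal N_{\Omega_i}(x_i^*)$). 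Letting $w$ range over the set $-\partial f_i(x_i^*)-K\sum_{j\in\mathcal N_i}\mathrm{Sgn}(x_i^*-x_j^*)$ converts each block inclusion into $\bm 0_m\in P_{\mathcal T_{\Omega_i}(x_i^*)}[-\partial f_i(x_i^*)-K\sum_{j\in\mathcal N_i}\mathrm{Sgn}(x_i^*-x_j^*)]$, which is exactly (\ref{opt_leme}).

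The main obstacle is the non-smooth consensus penalty, and specifically the coupling of the set-valued sign terms across agents. A genuine element of $\partial h(\bm x^*)$ must use \emph{antisymmetric} edge selections, i.e. the selection from $\mathrm{Sgn}(x_i^*-x_j^*)$ used by agent $i$ and the one from $\mathrm{Sgn}(x_j^*-x_i^*)$ used by agent $j$ must be opposite; these are forced to coincide (up to sign) whenever $x_i^*\neq x_j^*$, but remain free in $[-1,1]$ componentwise when $x_i^*=x_j^*$. For the necessity direction this consistency is automatic, since optimality supplies a single subgradient $\bm g\in\partial\mathcal L(\bm x^*)$ whose blocks are then projected. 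For sufficiency the delicate point is that the per-agent projected inclusions must be read as the block form of the \emph{one} inclusion $\bm 0\in\partial\mathcal L(\bm x^*)+\mathcal N_{\bar\Omega}(\bm x^*)$, i.e. with a common selection of the sign terms shared with opposite signs across adjacent agents; I would make this shared-selection interpretation explicit so that (\ref{opt_leme}) is not strictly weaker than optimality, and then both implications follow by reversing the equivalences above. Finally, the concluding claim is immediate: when $K>nc$, Lemma~\ref{pro_equ} identifies the optimizers of (\ref{dis_pen}) with those of (\ref{dis_str}), so any $\bm x^*$ satisfying (\ref{opt_leme}) solves (\ref{dis_str}) as well.
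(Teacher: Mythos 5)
Your proof is correct and follows essentially the same route as the paper's: both reduce optimality to the per-agent inclusion $\bm 0_m\in\partial f_i(x_i^*)+K\sum_{j\in\mathcal N_i}\mathrm{Sgn}(x_i^*-x_j^*)+\mathcal N_{\Omega_i}(x_i^*)$ (the paper by citing the KKT conditions of \cite{ruszczynski2006nonlinear}, you by the subdifferential sum rule and the product structure of $\mathcal N_{\bar\Omega}$) and then translate the normal-cone inclusion into the tangent-cone projection form. You are in fact more careful than the paper on two points it leaves implicit: the explicit check via (\ref{tancom}) that $P_{\mathcal T_{\Omega_i}(x_i^*)}(w)=\bm 0_m$ precisely when $w\in\mathcal N_{\Omega_i}(x_i^*)$, and the observation that the selections from $\mathrm{Sgn}(x_i^*-x_j^*)$ must be shared antisymmetrically across adjacent agents for the per-agent conditions to be equivalent to, rather than weaker than, global stationarity of $\mathcal L$.
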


\begin{proof}
According to the Karush-Kuhn-Tucker (KKT) optimal conditions \cite[Theorem 3.34]{ruszczynski2006nonlinear}, $\bm x^*={\rm col} \{x_1,\dots, x_n^*\}$ is an optimal solution to (\ref{dis_pen}) if and only if
\begin{equation} \label{tan_non}
\bm 0_m \in \partial f_i(x_i^*)+ K\sum\limits_{j\in \mathcal N_i} {\rm Sgn}(x_i^*-x_j^*)+ \mathcal N_{\Omega_i} (x_i^*),
\end{equation}
where $\mathcal N_{\Omega_i} (x_i^*)$ is defined in (\ref{norcon_def}).
It follows from (\ref{norcon_def}) that (\ref{tan_non}) holds if and only if 
(\ref{opt_leme}) holds.
Therefore, $\bm x^*$ is an optimal solution to (\ref{dis_pen}) if and only if (\ref{opt_leme}) holds. By Lemma \ref{pro_equ}, $\bm x^*$ is also an optimal solution to (\ref{dis_str}) if $K>nc$. Thus, the proof is completed.
\end{proof}

\section{MAIN RESULTS}

In this section, we propose a distributed continuous-time projected algorithm for (\ref{dis_pen}) with the help of a differential inclusion and the differentiated projection operator (\ref{tancom}). Then we show convergence properties of the algorithm.

\subsection{Distributed Continuous-Time Projected Algorithm}

For (\ref{dis_pen}), we design a distributed continuous-time projected algorithm as 
\begin{equation}\label{alg_sim}
\dot {\bm x}(t)\in P_{\mathcal T_{\bar \Omega}(\bm x(t))} [-\partial \mathcal L(\bm x(t))],~\bm x(0)=\bm x_0\in \bar \Omega.
\end{equation}

In fact, (\ref{alg_sim}) is inspired by the projected subgradient method \cite{mainge2008strong}. $\mathcal L(\bm x(t))$ is non-smooth due to the non-smoothness of  $f_i$ and $l_1$-norm in (\ref{lag_fun}). Thus, subgradients and differential inclusions are adopted. The projection operator is employed to guarantee the state trajectory of $\bm x(t)$ being in the constraint set $\bar \Omega$. 

For agent $i$, the specific form of (\ref{alg_sim}) is
\begin{equation}\label{alg}
\begin{aligned}
\dot x_i(t) \in P_{\mathcal T_{\Omega_i}(x_i(t))}[-\partial f_i(x_i(t)) -K\sum\limits_{j\in \mathcal N_i}{\rm Sgn}(x_i(t)-x_j(t))], 
~x_i(0)=x_{i,0}\in \Omega_i.
\end{aligned}
\end{equation}

Dynamics (\ref{alg_sim}) is discontinuous because of the projection onto the tangent cone and the non-smoothness of $\mathcal L(\bm x)$. However, its solution is still well-defined in the Caratheodory sense. The reasons are as follows.

Obviously, (\ref{alg_sim}) is of the form (\ref{dyn_2}), where  $\mathcal F(\bm x(t))=-\partial \mathcal L(\bm x(t))$. Notice that $\mathcal L(\bm x(t))$ is convex. Then ${\rm gph}\mathcal F$ is closed.  As a result, $\mathcal F$ is upper semi-continuous with compact convex values. Additionally, $\bar \Omega$ is convex and compact. 
Recalling part (\romannumeral1) of Lemma \ref{exi_lem}, dynamics
\begin{equation}
\dot{\bm x}(t) \in -\partial \mathcal L(\bm x(t))
-\mathcal N_{\bar\Omega}(\bm x)
\end{equation}
has a solution. Therefore, (\ref{alg_sim}) has a solution according to part (\romannumeral2) of Lemma \ref{exi_lem}.


We should note that (\ref{alg}) is a fully distributed algorithm because for each agent, only its local objective function, set constraint and neighbor's states are necessary. By Lemma \ref{opt_lem}, $\bm x^*$ is an optimal solution to (\ref{dis_pen}) if and only if it is an equilibrium point of dynamics (\ref{alg}). For (\ref{alg}), agent $i$ is required to project $-\partial f_i(x_i) -K\sum_{j\in \mathcal N_i}{\rm Sgn}(x_i-x_j)$ onto the tangent cone $\mathcal T_{\Omega_i}(x_i)$. However, the closed form for this projection is not difficult to be computed in practice, especially for some special convex sets such as polyhedrons, Euclidean balls and boxes. Similar projection operators have also been utilized in \cite{ zhu2018projected, zeng2016distributed, yi2016initialization}.

\begin{remark}
One of the most intriguing methods for (\ref{dis_str}) is the distributed projected primal-dual algorithm, which has been discussed in \cite{zeng2016distributed, lei2016primal}.
As a comparison, (\ref{alg}) is with lower communication and computation burden for each agent because dual variables are not necessary.
In fact, similar ideas as (\ref{alg}) have also been explored in \cite{lin2016distributed, zhang2018distributed}. However, the set constraints were not considered in \cite{zhang2018distributed}. We extend the smooth objective functions in \cite{lin2016distributed} into non-smooth cases, and design a different algorithm with an exponential convergence rate.
\end{remark}

\subsection{Convergence Analysis}

It is time to show the convergence for dynamics (\ref{alg}). Before showing the result, we introduce a lemma as follows.

\begin{lemma}\label{solset_lem}
Consider dynamics (\ref{alg}). If $x_i(0)\in \Omega_i$, then $x_i(t) \in \Omega_i$ for all $t\ge 0$.
\end{lemma}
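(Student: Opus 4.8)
The plan is to show that the candidate Lyapunov function $W(t)=\tfrac12\,\mathrm{dist}(x_i(t),\Omega_i)^2$ stays at zero, so that the trajectory never leaves $\Omega_i$. First I would record the defining feature of the dynamics: by the projection formula (\ref{tancom}), the right-hand side of (\ref{alg}) is the projection of a vector onto the tangent cone, hence $\dot x_i(t)\in\mathcal T_{\Omega_i}(x_i(t))$ for almost every $t$ at which the derivative exists. This is the tangency (viability) condition that should force invariance of the convex set $\Omega_i$.

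Next I would verify that $W$ is well behaved along the solution. Since $x_i(\cdot)$ is absolutely continuous and $\Omega_i$ is closed and convex (Assumption \ref{con_ass}), the map $x\mapsto\tfrac12\,\mathrm{dist}(x,\Omega_i)^2$ is continuously differentiable with gradient $x-P_{\Omega_i}(x)$, so $W$ is absolutely continuous and, for almost all $t$,
\begin{equation*}
\dot W(t)=\big\langle\, x_i(t)-P_{\Omega_i}(x_i(t)),\ \dot x_i(t)\,\big\rangle .
\end{equation*}
The crux is then to show $\dot W(t)\le 0$. Using (\ref{pro_ine2}) the residual $x_i-P_{\Omega_i}(x_i)$ lies in the normal cone, while $\dot x_i$ lies in the tangent cone from the first step; because $\mathcal T_{\Omega_i}$ and $\mathcal N_{\Omega_i}$ are mutually polar, the inner product of a tangent vector and a normal vector is nonpositive, giving $\dot W(t)\le 0$. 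Together with $W(0)=0$ (from $x_i(0)\in\Omega_i$) and $W\ge 0$, integrating yields $W(t)\equiv 0$, i.e. $x_i(t)\in\Omega_i$ for all $t\ge 0$.

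An alternative, and perhaps cleaner, route is to invoke the equivalence in Lemma \ref{exi_lem}(\romannumeral2): the solution of (\ref{alg}) is also a solution of the normal-cone inclusion $\dot x_i\in-\partial f_i(x_i)-K\sum_{j\in\mathcal N_i}\mathrm{Sgn}(x_i-x_j)-\mathcal N_{\Omega_i}(x_i)$, whose solutions provided by the Aubin--Cellina existence theory remain in $\Omega_i$ by construction.

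I expect the main obstacle to be the bootstrapping between the a.e.\ tangency condition and genuine set invariance. The tangent cone $\mathcal T_{\Omega_i}(x_i)$ and the polarity I rely on are only defined while $x_i\in\Omega_i$, so one must argue carefully at the boundary --- e.g.\ by considering the first exit time and showing the distance cannot increase past it --- rather than assuming membership in $\Omega_i$ throughout. Handling this boundary behavior rigorously (a Nagumo/viability-type argument) is the delicate part; everything else is routine once $\dot W\le 0$ is established.
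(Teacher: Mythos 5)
Your argument is essentially the paper's own proof: it also takes the squared distance $E_i(x_i(t))=\Vert x_i(t)-P_{\Omega_i}(x_i(t))\Vert^2$, pairs the residual $x_i-P_{\Omega_i}(x_i)\in\mathcal N_{\Omega_i}(x_i)$ with $\dot x_i$ lying in the (projected) tangent direction to conclude $\dot E_i\le 0$, and then uses $E_i(0)=0$ to get $E_i\equiv 0$. The boundary/viability subtlety you flag is real but is glossed over in the paper as well, so your proposal matches the paper's route and level of rigor.
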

\begin{proof}
For $i\in \mathcal V$, we construct a function as 
$$E_i(x_i(t))=\Vert x_i(t)-P_{\Omega_i}(x_i(t)) \Vert^2.$$ 
Then we have
\begin{equation*}
\nabla E_i(x_i)=  x_i-P_{\Omega_i}(x_i)\in N_{\Omega_i}(x_i).
\end{equation*}
On the other hand,
\begin{equation*}
\begin{aligned}
\dot E_i
=&\langle x_i(t)-P_{\Omega_i}(x_i(t)),~\dot x_i(t) \rangle.
\end{aligned}
\end{equation*}
By (\ref{norcon_def}) and (\ref{alg}), we obtain 
$\dot E_i \le 0$. In other words, $E_i(x_i(t))$ is non-increasing. Furthermore,  $E_i(x_i(0))=0$ due to $x_i(0) \in \Omega_i$. As a result, $E_i(x_i(t))=0$  for all $t \ge 0$,
and then $x_i(t)\in \Omega_i$. Thus, the conclusion follows.
\end{proof}

Lemma \ref{solset_lem} implies that $\bm x(t) \in \bar \Omega$ for all $t>0$ if $\bm x(0)\in \bar \Omega$. The following theorem shows the convergence of  $\bm x(t)$. 

\begin{theorem} 
\label{conve_the}
Consider dynamics (\ref{alg}). Under Assumptions \ref{con_ass}, \ref{lip_ass} and \ref{gra_ass}, $\bm x(t)$ converges to an equilibrium point $\bm x^*$, which is also an optimal solution to  (\ref{dis_pen}). 
\end{theorem}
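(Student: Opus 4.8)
The plan is to establish Fej\'er monotonicity of the trajectory toward the set of equilibria and then upgrade this to convergence of the whole trajectory to a single point. First I would record that an equilibrium exists: since each $f_i$ is convex and continuous and each $\Omega_i$ is compact (Assumption \ref{con_ass}), $\mathcal L$ is continuous and attains its minimum over the compact set $\bar\Omega$, and by Lemma \ref{opt_lem} any such minimizer $\bm x^*$ is an equilibrium of (\ref{alg}). Fixing one such $\bm x^*$, I would take the Lyapunov candidate $V(\bm x)=\tfrac12\Vert\bm x-\bm x^*\Vert^2$. To differentiate it along a Caratheodory solution I would first pass to the equivalent form $\dot{\bm x}\in-\partial\mathcal L(\bm x)-\mathcal N_{\bar\Omega}(\bm x)$ supplied by part (ii) of Lemma \ref{exi_lem}, so that for almost every $t$ one may write $\dot{\bm x}(t)=-g(t)-\eta(t)$ with $g(t)\in\partial\mathcal L(\bm x(t))$ and $\eta(t)\in\mathcal N_{\bar\Omega}(\bm x(t))$. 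As $V$ is $C^1$ and $\bm x(\cdot)$ is absolutely continuous, $\tfrac{d}{dt}V(\bm x(t))=\langle\bm x(t)-\bm x^*,\dot{\bm x}(t)\rangle$ holds almost everywhere.

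Second, I would bound this derivative using two structural inequalities already at hand. Convexity of $\mathcal L$ in the form (\ref{cov}) gives $\langle\bm x-\bm x^*,g\rangle\ge\mathcal L(\bm x)-\mathcal L(\bm x^*)$, while $\bm x^*\in\bar\Omega$ together with the definition of the normal cone (\ref{norcon_def}) gives $\langle\bm x-\bm x^*,\eta\rangle\ge0$. Combining these, $\dot V(t)\le-\bigl(\mathcal L(\bm x(t))-\mathcal L(\bm x^*)\bigr)\le0$, where the final inequality uses $\bm x(t)\in\bar\Omega$ (Lemma \ref{solset_lem}) and optimality of $\bm x^*$. Hence $V$ is non-increasing; this simultaneously yields boundedness of the trajectory and Fej\'er monotonicity with respect to every equilibrium.

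Third, setting $\sigma(t)=\mathcal L(\bm x(t))-\mathcal L(\bm x^*)\ge0$ and integrating $\dot V\le-\sigma$ gives $\int_0^t\sigma(s)\,ds\le V(0)<\infty$, so this integral converges as $t\to\infty$. To invoke Barbalat's lemma (Lemma \ref{Barlem}) I would check that $\sigma$ is uniformly continuous: the projection onto the tangent cone is non-expansive and maps the origin to the origin, and $\partial\mathcal L$ is bounded on the compact set $\bar\Omega$, so $\dot{\bm x}$ is bounded and $\bm x(\cdot)$ is Lipschitz; composing with the locally Lipschitz $\mathcal L$ (Assumption \ref{lip_ass} together with the globally Lipschitz $l_1$-term) makes $\sigma$ Lipschitz, hence uniformly continuous. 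Barbalat then gives $\sigma(t)\to0$, i.e. $\mathcal L(\bm x(t))\to\mathcal L(\bm x^*)$, so every limit point of $\bm x(t)$ is optimal and therefore an equilibrium by Lemma \ref{opt_lem}.

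Finally, the main obstacle is that Assumption \ref{sto_con_ass} is deliberately \emph{not} invoked here, so the minimizer need not be unique and one must rule out the trajectory drifting among distinct optima: value convergence alone does not give state convergence. I would close this gap with the standard subsequence argument. Since $\bm x(t)$ lies in the compact set $\bar\Omega$, extract a convergent subsequence $\bm x(t_k)\to\tilde{\bm x}$; by the previous step $\tilde{\bm x}$ is an equilibrium, so I may re-run the Lyapunov estimate with $\bm x^*$ replaced by $\tilde{\bm x}$, concluding that $t\mapsto\tfrac12\Vert\bm x(t)-\tilde{\bm x}\Vert^2$ is non-increasing; since it tends to $0$ along $t_k$, it tends to $0$ for all $t$, whence $\bm x(t)\to\tilde{\bm x}$. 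The technically delicate points are the almost-everywhere chain-rule justification on the differential inclusion and the non-expansiveness bound underlying uniform continuity, but the genuine conceptual step is precisely this monotonicity-plus-subsequence passage from convergence of objective values to convergence of the state to a single equilibrium.
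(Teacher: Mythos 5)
Your proposal is correct and follows essentially the same route as the paper: the quadratic Lyapunov function $V=\tfrac12\Vert\bm x-\bm x^*\Vert^2$, elimination of the projection via the normal-cone characterization, the bound $\dot V\le-(\mathcal L(\bm x)-\mathcal L(\bm x^*))\le 0$, Barbalat's lemma applied to $\mathcal L(\bm x(t))-\mathcal L(\bm x^*)$, and the final Fej\'er-monotonicity-plus-subsequence step to pin down a single limit point. Your justification of the uniform continuity of $\sigma$ (via boundedness of $\dot{\bm x}$ on the compact set $\bar\Omega$) is in fact slightly more explicit than the paper's, but the argument is otherwise the same.
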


\begin{proof}
Let $\bm x^*=\{x_1^*, \dots, x_n^*\}$ be an equilibrium point of dynamics (\ref{alg}). Construct a Lyapunov candidate function as
\begin{equation} \label{lya1}
V=\frac 12\sum\limits_{i=1}^n \Vert x_i(t)-x_i^*\Vert^2.
\end{equation}
Clearly, the function $V$ along  (\ref{alg}) satisfies 
\begin{equation*} \label{lya_re}
\begin{aligned}
\mathcal L_{\mathcal F} V&=\{a\in \mathbb R: a=\sum\limits_{i=1}^n\langle x_i-x_i^*, P_{\mathcal T_{\Omega_i}(x_i)}[-\eta_i 
&-K\sum\limits_{j\in \mathcal N_i} \xi_{ij}] \rangle, \eta_i \in \partial f_i(x_i), \xi_{ij} \in {\rm Sgn}(x_i-x_j)\}.
\end{aligned}
\end{equation*}
By (\ref{pro_ine2}) and (\ref{alg}), we obtain
\begin{equation*} 
\begin{aligned}
-\eta_i -K\sum\limits_{j\in \mathcal N_i} \xi_{ij}-\dot x_i \in N_{\Omega_i}(x_i).
\end{aligned}
\end{equation*}
Due to (\ref{norcon_def}), we have
\begin{equation}
\label{dev_in2} 
\begin{aligned}
\langle x_i^*-x_i, -\eta_i -K\sum\limits_{j\in \mathcal N_i} \xi_{ij}-\dot x_i\rangle
\le 0.
\end{aligned}
\end{equation}
As a result,
\begin{equation}\label{dev_in}
a \le \sum\limits_{i=1}^n\langle x_i-x_i^*, -\eta_i -K\sum\limits_{j\in \mathcal N_i} \xi_{ij} \rangle.
\end{equation}
Define
\begin{equation} \label{Wdef}
W(t)=\mathcal L({\bm x}(t)) -\mathcal L(\bm x^*).
\end{equation}
Recalling Lemma \ref{solset_lem} gives $W(\bm x)\ge 0$ because $\bm x(t) \in \bar\Omega$. 
Combining (\ref{cov}) and (\ref{dev_in}), we derive
\begin{equation}  \label{lya_re3}
\begin{aligned}
a \le\sum\limits_{i=1}^n(f_i(x_i^*)-f_i(x_i)) -\frac K2 \sum\limits_{i=1}^n \sum\limits_{j\in \mathcal N_i}|x_i-x_j|
\le -W(t) \le 0.
\end{aligned}
\end{equation}

Since $\mathcal L(\cdot)$ is locally Lipschitz continuous and $\bm x(t)$ is absolutely continuous, $W(t)$ is uniformly continuous in $t$. Then $W(t)$ is Riemman integrable. 
As a result, $W(t)$ is Lebesgue integrable, and the integral is equal to the Riemann integral. It follows from (\ref{lya_re3}) that the Lebesgue integral of $W(t)$ over the infinite interval $[0, +\infty)$ is bounded.

In summary, $\int_0^t W(\bm x(\tau)) d\tau$ exists and is finite.
Furthermore, $\int_0^t W(\tau) d\tau$
is monotonically increasing because $W(t)$ is nonnegative.
Define $\mathcal M=\{\bm x|W(\bm x(t))=0\}$. By (\ref{Wdef}), any $\bm x \in \mathcal M$ is an equilibrium point of dynamics (\ref{alg}). Based on Lemma \ref{Barlem}, we derive $\bm x(t) \to \mathcal M$ as  $t\to \infty$.

Finally, we show that the trajectory $\bm x(t)$ converges to
one of the equilibrium points in $\mathcal M$. There exists a
strictly increasing sequence $\{t_k\}$ with $\lim_{k\to\infty} t_k=+\infty$ such that $\lim_{k\to\infty} \bm x(t_k) =\tilde {\bm x}$ because 
$\lim_{t\to \infty} \bm x(t) \rightarrow \mathcal M$. 
Consider a new Lyapunov function $\tilde V$ defined as (\ref{lya1}) by replacing $\bm x^*$ with $\tilde {\bm x}$.
By a similar procedure as above for $V$, we have $\dot{\tilde V} \le 0$. For any $\epsilon > 0$ , there exists $T$ such that $\tilde V(\bm x(T)) < \epsilon$. Because of $\dot{\tilde V} \le 0$, we obtain
\begin{equation*}
\frac 12\Vert\bm x(t)-\tilde {\bm x} \Vert^2 \le \tilde V(\bm x(T)) < \epsilon,~\forall t\ge T
\end{equation*}
 which implies $\lim_{t\to \infty} \bm x(t) =\tilde {\bm x}$. 
 
According to Lemma \ref{opt_lem}, $\bm x^*$ is an equilibrium point of dynamics (\ref{alg}) if and only if it is an optimal solution to (\ref{dis_pen}). Thus, the proof is completed.
\end{proof}

\begin{remark}
Theorem \ref{conve_the} indicates that dynamics (\ref{alg}) converges to one of the equilibria without Assumption \ref{sto_con_ass}, even in the absence of the strict convexity assumption on objective functions \cite[Assumption 3.3]{zeng2018distributedsiam}, \cite[Remark 4.5]{cherukuri2018role}.
\end{remark}

\subsection{Convergence Rate Analysis}
In this subsection, we analyze the convergence rate for dynamics (\ref{alg}). As is known to all, for a nonlinear dynamics, there is not a unifying framework to estimate its convergence rate. However, in this work, dynamics (\ref{alg}) is carefully designed with the projection onto the tangent cone, which can be easily eliminated via (\ref{dev_in2}). Then Assumption \ref{sto_con_ass} can be naturally employed for the exponential convergence analysis. The main result is shown as follows.

\begin{theorem}
Under Assumptions 1–4, dynamics (\ref{alg}) converges to its equilibrium point exponentially.
\end{theorem}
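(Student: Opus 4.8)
The plan is to reuse the Lyapunov function $V$ in (\ref{lya1}) together with the same projection-elimination trick that produced (\ref{dev_in}), and then to upgrade the bound $a\le 0$ of Theorem \ref{conve_the} into the strict decay $a\le -2\beta V$ by invoking the strong convexity in (\ref{stro_equ}). Concretely, for any $a\in\mathcal L_{\mathcal F}V(\bm x)$ generated by selections $\eta_i\in\partial f_i(x_i)$ and $\xi_{ij}\in{\rm Sgn}(x_i-x_j)$, the inequalities (\ref{dev_in2})--(\ref{dev_in}) already give
\begin{equation*}
a\le \sum\limits_{i=1}^n \langle x_i-x_i^*, -\eta_i -K\sum\limits_{j\in\mathcal N_i}\xi_{ij}\rangle.
\end{equation*}
The goal is to show every such $a$ satisfies $a\le -2\beta V$, so that the nonsmooth chain rule yields $\dot V\le -2\beta V$ for almost all $t$, and Grönwall's inequality then gives $\|\bm x(t)-\bm x^*\|^2 = 2V(t)\le 2V(0)e^{-2\beta t}$, i.e. exponential convergence with rate $\beta$.

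First I would extract from the optimality characterization (\ref{tan_non}) at the equilibrium $\bm x^*$ fixed selections $\eta_i^*\in\partial f_i(x_i^*)$, $\xi_{ij}^*\in{\rm Sgn}(x_i^*-x_j^*)$ and $\nu_i^*\in\mathcal N_{\Omega_i}(x_i^*)$ with $\eta_i^*+K\sum_{j\in\mathcal N_i}\xi_{ij}^*+\nu_i^*=\bm 0_m$. Adding and subtracting $\eta_i^*$ splits the local term as $\langle x_i-x_i^*, -\eta_i\rangle = -\langle x_i-x_i^*, \eta_i-\eta_i^*\rangle -\langle x_i-x_i^*, \eta_i^*\rangle$; the first piece is bounded by $-\beta\|x_i-x_i^*\|^2$ via (\ref{stro_equ}), summing to $-2\beta V$, and the second piece is rewritten using the KKT relation as $K\sum_j\langle x_i-x_i^*, \xi_{ij}^*\rangle + \langle x_i-x_i^*, \nu_i^*\rangle$. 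Since Lemma \ref{solset_lem} guarantees $x_i(t)\in\Omega_i$, the definition (\ref{norcon_def}) of the normal cone gives $\langle x_i-x_i^*, \nu_i^*\rangle \le 0$, so that contribution is discarded.

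What remains is the consensus-penalty coupling $K\sum_i\sum_{j\in\mathcal N_i}\langle x_i-x_i^*, \xi_{ij}^*-\xi_{ij}\rangle$, and this is the step I expect to be the main obstacle. The plan is to reorganize the double sum over ordered pairs into a sum over undirected edges, using the antisymmetry ${\rm Sgn}(x_j-x_i)=-{\rm Sgn}(x_i-x_j)$ to pair $(i,j)$ with $(j,i)$ and consistently choose $\xi_{ji}=-\xi_{ij}$, $\xi_{ji}^*=-\xi_{ij}^*$. Each edge then contributes $-K\langle (x_i-x_j)-(x_i^*-x_j^*), \xi_{ij}-\xi_{ij}^*\rangle$, where $\xi_{ij}$ and $\xi_{ij}^*$ are subgradients of the convex map $u\mapsto |u|$ at $x_i-x_j$ and $x_i^*-x_j^*$, respectively. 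Monotonicity of the subdifferential of a convex function makes each such inner product nonnegative, so the whole coupling term is $\le 0$. Collecting the three contributions yields $a\le -2\beta V$ uniformly over all selections, after which the comparison argument above completes the proof. The delicate points to verify are that the edge pairing is legitimate (undirected graph, symmetric neighbor sets) and that the antisymmetric selection of the set-valued signs is admissible simultaneously with the selections realizing $a$.
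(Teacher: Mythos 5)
Your proposal is correct and follows essentially the same strategy as the paper: the Lyapunov function (\ref{lya1}), elimination of the projection via (\ref{dev_in2})--(\ref{dev_in}), cancellation of the optimality condition at $\bm x^*$, and strong convexity (\ref{stro_equ}) to close the loop. The only genuine divergence is in how the two remaining terms are dispatched. Where you expand the KKT stationarity condition (\ref{tan_non}) with an explicit normal-cone element $\nu_i^*$ and discard $\langle x_i-x_i^*,\nu_i^*\rangle\le 0$ separately, the paper packages the same information as the variational inequality (\ref{lya2_var}) and subtracts it in one stroke; these are equivalent. For the consensus-penalty coupling, the paper bounds it by appealing to convexity of $\mathcal L$ as a whole (treating $\eta_i+K\sum_j\xi_{ij}$ as a joint element of $\partial\mathcal L(\bm x)$), whereas your edge-by-edge pairing with the antisymmetric selection $\xi_{ji}=-\xi_{ij}$ and monotonicity of $\partial|\cdot|$ makes explicit exactly the structural fact the paper uses implicitly --- the admissibility concern you flag is resolved precisely because the dynamics (\ref{alg_sim}) is driven by $-\partial\mathcal L$, and the subdifferential of each edge term $|x_i-x_j|$ in the joint variable $(x_i,x_j)$ is $\{(\xi,-\xi):\xi\in{\rm Sgn}(x_i-x_j)\}$, so the antisymmetric choice is forced, not optional. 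One small bonus: your bookkeeping gives $a\le-2\beta V$ and hence $V(t)\le V(0)e^{-2\beta t}$, whereas the paper writes $-\beta\sum_i\Vert x_i-x_i^*\Vert^2=-\beta V$, which drops a factor of $2$ (since $\sum_i\Vert x_i-x_i^*\Vert^2=2V$); your rate constant is the correct, tighter one.
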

\begin{proof}
Note that there is only one optimal solution to (\ref{dis_pen}) due to Assumption \ref{sto_con_ass}. Then the equilibrium point of (\ref{alg}) is unique.
According to (\ref{dev_in}), we derive
\begin{equation} 
\begin{aligned}
\label{lya2_1}
a \le \sum\limits_{i=1}^n\langle x_i-x_i^*, -\eta_i -K\sum\limits_{j\in \mathcal N_i} \xi_{ij} \rangle.
\end{aligned}
\end{equation}
For $\eta_i^* \in \partial f_i(x_i^*), \xi_{ij}^* \in {\rm Sgn}(x_i^*-x_j^*)$, $x_i^*$ is an optimal solution to (\ref{dis_pen}) if and only if
\begin{equation}
\label{lya2_var}
\langle x_i-x_i^*,\eta_i^* +K\sum\limits_{j\in \mathcal N_i} \xi_{ij}^* \rangle \ge 0,~\forall x_i \in \Omega_i.
\end{equation}
Substituting (\ref{lya2_var}) into (\ref{lya2_1}), we obtain
\begin{equation*}
\begin{aligned}
a \le& -\sum\limits_{i=1}^n\langle x_i-x_i^*, \eta_i-\eta_i^* +K\sum\limits_{j\in \mathcal N_i} \xi_{ij}-K\sum\limits_{j\in \mathcal N_i} \xi_{ij}^* \rangle \\
\le& -\sum\limits_{i=1}^n \big\langle x_i-x_i^*,\eta_i-\eta_i^*\rangle-\sum\limits_{i=1}^n \sum\limits_{j \in \mathcal N_i}\frac K2 |x_i-x_j|\\
\le& -\sum\limits_{i=1}^n \big\langle x_i-x_i^*,\eta_i-\eta_i^*\rangle
\end{aligned}
\end{equation*}
Recalling (\ref{stro_equ}) gives
\begin{equation*}
a \le -\sum\limits_{i=1}^n\beta\Vert x_i-x_i^* \Vert^2
= -\beta V.
\end{equation*}
As a result, $V(t) \le V(0)e^{-\beta t}$, and $\bm x(t)$ convergence to $\bm x^*$ exponentially. Thus, the conclusion follows.
\end{proof}

\begin{remark}
As to problem (\ref{pri_pro}), a distributed algorithm with an exponential convergence rate is provided for the first time in this work.
On one hand, the consensus constraints in (\ref{dis_str}) are eliminated by the exact penalty method, and then dual variables are not necessary for the algorithm design. On the other hand, properties of the differentiated projection operator (\ref{tancom}) are greatly explored in this work.
\end{remark}

\section{NUMERICAL SIMULATIONS}

In this section, two numerical simulations are carried out for illustration. 
Dynamics (\ref{alg}) is a differential inclusion, and thus, Euler discretization is employed for its numerical implementations in this work. At each step, any subgradient in (\ref{alg}) can be selected. With a fixed stepsize $\alpha$, the $\mathcal O(\alpha)$ approximation is preserved. 

\begin{example}
Here, we provide a numerical example with non-smooth objective functions to verify the convergence of dynamics (\ref{alg}). Consider the multi-agent network with four agents. The communication graph forms a star network. The objective functions and constraints are given by
\begin{equation*}
f_i(x)=|x-a_i|+b_i^Tx,
\end{equation*}
and $\Omega_i=\{x \in \mathbb R^4 ~|~\Vert x-c_i\Vert \le d_i, ~d_i> 0\}$, respectively, where $a_i, b_i, c_i$ and $d_i$ are randomly generated.

Fig. 1 shows the state trajectories of dynamics (\ref{alg}). From the result, (\ref{alg}) converges to an equilibrium point. Moreover, all  agents obtain a consensus solution because $\lim_{t\rightarrow \infty} x_i(t)=x_j(t)$. 

\begin{figure}[H]
\centering
\includegraphics[scale=0.4]{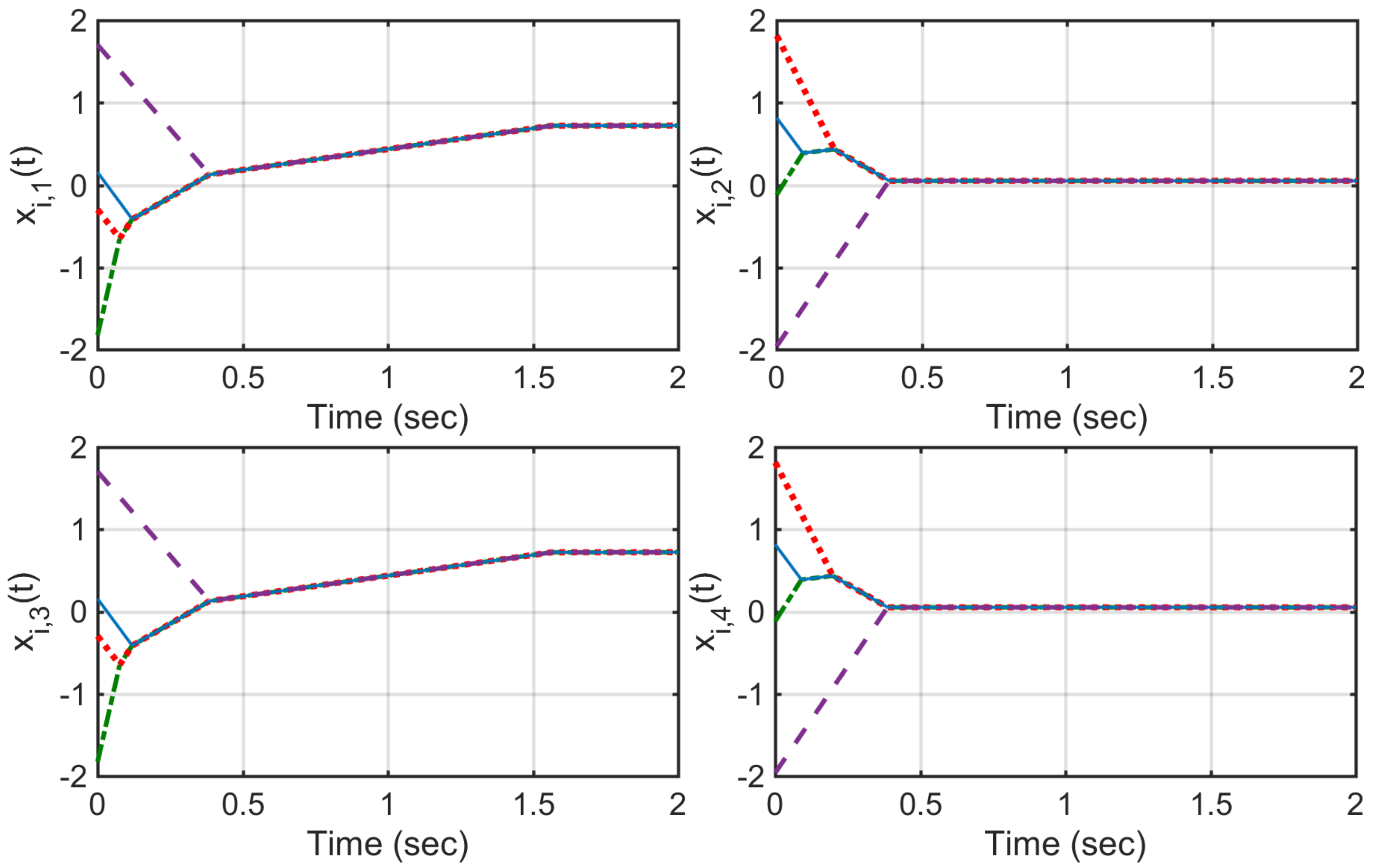}
\caption{State trajectories of algorithm (\ref{alg}).}
\end{figure}

\end{example}

\begin{example}
We carry out a numerical example with strongly convex objective functions to demonstrate exponential convergence of dynamics (\ref{alg}). The multi-agent network consists of thirty agents connected by a cyclic graph. The objective functions are given by
\begin{equation*}
f_i(x)= \frac 12 x^TP_ix+ q_i^T x+r_i|x|,
\end{equation*}
where $P_i \in \mathbb R^{10 \time 10}$ is positive definite,  $q_i\in \mathbb R^{10}$ and $r_i\in \mathbb R_{\ge 0}$. Box constraints are utilized, that is, $\Omega_i=\{x \in \mathbb R^{10} ~|~ l_i \le x\le u_i, u_i> l_i\}$.  Coefficients including $P_i, q_i, r_i, l_i$ and $u_i$ are randomly generated. 

Fig. 2(a) shows the trajectory of $f(\bm x)$, while Fig. 2(b) presents the trajectory of  ${\rm log}(V(t))$. Fig. 2(a) indicates the convergence of (\ref{alg}), and Fig. 2(b) reveals the exponential convergence.

\begin{figure}[H]
\centering
\includegraphics[scale=0.4]{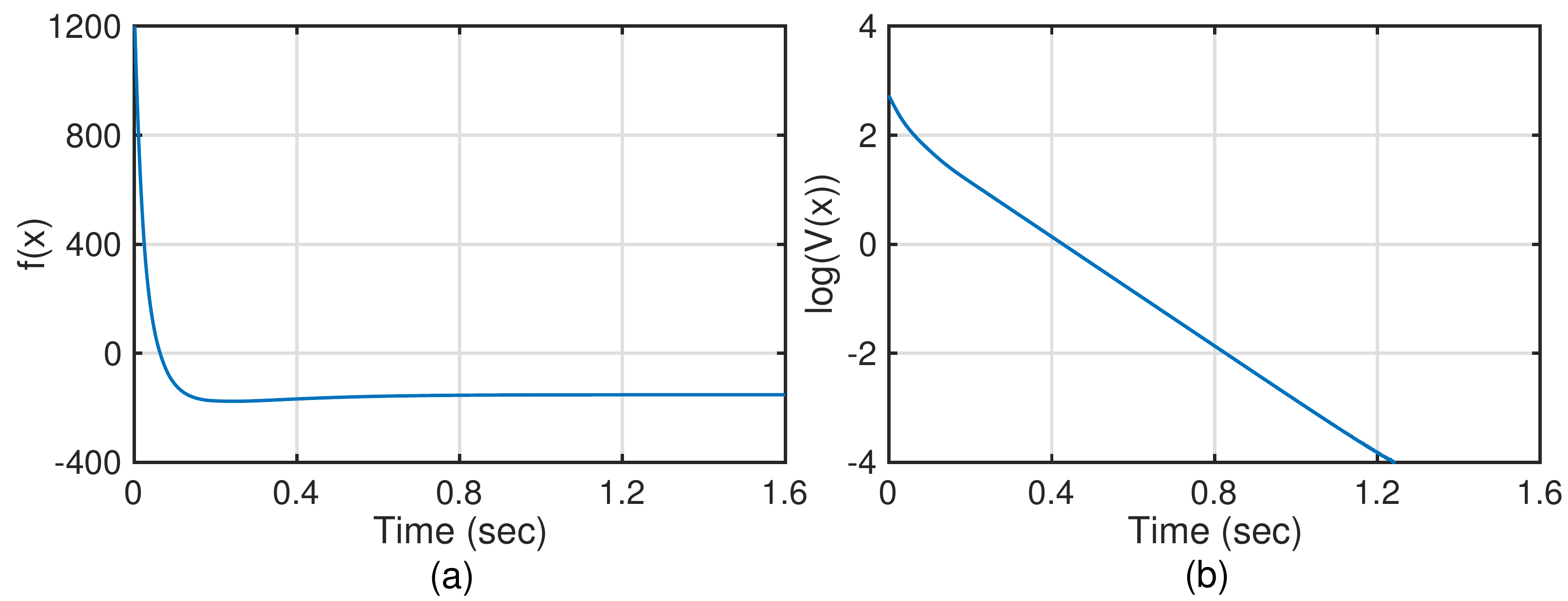}
\caption{Trajectories of $f(\bm x)$ and ${\rm log}(V(t))$ of algorithm (\ref{alg}).}
\end{figure}

\end{example}

\section{CONCLUSION}

This paper aimed at the distributed optimization problem with local set constraints. This problem was equivalently reformulated  without consensus constraints by the penalty method. Resorting to a differentiated projection operation and the subgradient method, a  distributed continuous-time algorithm was proposed. 
The optimal solution could be obtained with an exponential convergence rate for strongly convex objective functions.
Finally, two numerical examples were carried out to verify the results.

\bibliographystyle{plain}
\bibliography{reference.bib}

\end{document}